\newtheorem{theorem}{Theorem}[section]
\newtheorem{lemma}[theorem]{Lemma}
\newtheorem{question}[theorem]{Question}
\newtheorem{definition}[theorem]{Definition}
\newtheorem{example}[theorem]{Example}
\newtheorem{remark}[theorem]{Remark}
\numberwithin{equation}{section}
\def\rmark{\mbox{$\rm\bf\rule{0.06em}{1.45ex}\kern-0.05em R$}}
\def\pmark{\mbox{$\rm\bf\rule{0.06em}{1.45ex}\kern-0.05em P$}}
\def\nmark{\mbox{$\rm\bf\rule{0.06em}{1.45ex}\kern-0.05em N$}}
\def\vdash{\mbox{$\rm\| \kern-0.13em -$}}
\def\rmark{\mbox{$\rm\bf\rule{0.06em}{1.45ex}\kern-0.05em R$}}
\def\pmark{\mbox{$\rm\bf\rule{0.06em}{1.45ex}\kern-0.05em P$}}
\def\nmark{\mbox{$\rm\bf\rule{0.06em}{1.45ex}\kern-0.05em N$}}
\def\vdash{\mbox{$\rm\| \kern-0.13em -$}}
\begin{document}

\title[Almost Souslin Kurepa trees]{Almost Souslin Kurepa trees }

\author[M. Golshani]{Mohammad Golshani}

\thanks{The author would like to thank the School of Mathematics,
Institute for Research in Fundamental Sciences (IPM) for their
supports during the preparation of this paper. He also wishes to
thank Dr. E. Eslami and Dr. Sh. Mohsenipour for their inspiration and encouragement.} \maketitle




\begin{abstract}
We show that the existence of an almost Souslin Kurepa tree is
consistent with $ZFC$. We also prove their existence in $L$. These
results answer two questions from [16].
\end{abstract}

\maketitle


\section{Introduction}

The theory of trees forms a significant and highly interesting
part of set theory. In this paper we study $\omega_1-$trees and
prove some consistency results concerning them. Let $T$ be a
normal $\omega_1-$ tree. Let's recall that:

\begin{itemize}
\item $T$ is an Aronszajn tree if it has no branches,  \item $T$
is a Kurepa tree if it has at least $\omega_2-$many branches,
\item $T$ is a Souslin tree if it has no uncountable antichains
(and hence no branches), \item $T$ is an almost Souslin tree if
for any antichain $X \subseteq T,$ the set $S_X = \{ht(x):x \in X
\}$ is not stationary (see [1], [16]), \item $T$ is regressive if
for any limit ordinal $\alpha < \omega_1,$ there is a function
$f:T_{\alpha} \rightarrow T_{< \alpha}$ such that for any $x \in
T_{\alpha}, f(x)<_T x,$ and for any $x \neq y$ in $T_{\alpha},$ at
least one of $f(x)$ or $f(y)$ is above the meet of $x$ and $y$
(see [8]).
\end{itemize}

Intuitively a Kurepa tree is very thick. On the other hand a
Souslin tree is very thin, and obviously no Kurepa tree is a Souslin
tree. We can think of an almost Souslin tree as a fairly thin tree.
The following are well known:
\begin{itemize}
\item There is an Aronszajn tree (Aronszajn, see [3] for proof),  \item It is consistent with $ZFC$ that a Souslin
tree exists (Jech [2], Tennenbaum [13]),
\item $V=L$ implies the existence of a Souslin tree
(Jensen [5], see also [6]),
\item It is consistent with $ZFC+ \neg CH$ to
assume there is no Souslin tree (Solovay and Tennenbaum [11]),
\item It is consistent with $ZFC+GCH$ to assume
there is no Souslin tree (Jensen [7], see [9] for  proof),
\item It is consistent with $ZFC$ that a Kurepa
tree exists (Stewart [12], see [3] for  proof),
\item $V=L$ implies the existence of a Kurepa tree
(Solovay, see [3] for  proof),
\item It is consistent, relative to the existence
of an inaccessible cardinal, that there is no Kurepa tree (Silver [10]).
\end{itemize}

For more details on trees, we refer the readers to the articles
[3] and [14]. The following example shows that almost Souslin
trees exist in $ZFC.$

\begin{example}
 Let $T=\{t \in$$^{< \omega_1}2: Supp(t)$ is finite$ \}$. Then it is easily seen that $T$ is an almost Souslin tree with
$\omega_1-$many branches and with no Aronszajn subtrees (See also
[14] Theorem 4.1).
\end{example}
Now, in [16], Zakrzewski asked the following questions:

\begin{question}
Is the existence of an almost Souslin Kurepa tree
consistent with $ZFC$?
\end{question}

\begin{question}
Does the axiom of constructibility guarantee the
existence of an almost Souslin Kurepa tree?
\end{question}
In this paper, we answer both of these questions positively. In
section 2 we answer question 1.2 by building a model of $ZFC$ in
which an almost Souslin Kurepa tree exists, and in section 3 we
answer question 1.3 by showing that the existence of an
$(\omega_1, 1)-$morass implies the existence of an almost Souslin
Kurepa tree.

\section{Forcing an almost Souslin Kurepa tree}

In this section we answer question 1.2. In fact we will prove
something stronger, which also extends some results from [8].

\begin{theorem}
Assume $GCH$. Then there exists a cardinal preserving
generic extension of $V$ in which an almost Souslin
regressive Kurepa tree exists.
\end{theorem}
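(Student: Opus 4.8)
The idea is to add the tree by a single forcing $\mathbb{P}$ whose conditions are countable approximations to a normal tree on $\omega_1$, each carrying along (i) labels for $\omega_2$ prospective cofinal branches and (ii) a coherent level-by-level system of regressing functions. Granting that $\mathbb{P}$ is countably closed and, under $\GCH$, has the $\omega_2$-chain condition, $\mathbb{P}$ preserves all cardinals and cofinalities and keeps $\GCH$; the generic tree is then a regressive Kurepa tree, and the remaining point is its almost Souslinity.

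\smallskip
\noindent\emph{The forcing.} A condition is a triple $p=\langle T_p,\bar f_p,B_p\rangle$, where $T_p$ is a normal tree whose domain is a countable ordinal and whose height is a successor ordinal $\alpha_p+1<\omega_1$ with all levels countable; $\bar f_p=\langle f^p_\delta:\delta<\alpha_p\text{ limit}\rangle$, with $f^p_\delta:\Lev_\delta(T_p)\to T_p\restriction\delta$ obeying $f^p_\delta(x)<_{T_p}x$ and the splitting clause from the definition of ``regressive''; and $B_p$ is a function with $\dom(B_p)\in[\omega_2]^{\le\omega}$ and $B_p(\xi)\in\Lev_{\alpha_p}(T_p)$ for each $\xi$. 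We set $p\le q$ iff $T_p$ end-extends $T_q$, $\bar f_p$ end-extends $\bar f_q$, $\dom(B_q)\subseteq\dom(B_p)$, and $B_p(\xi)\ge_{T_p}B_q(\xi)$ for $\xi\in\dom(B_q)$.

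\smallskip
\noindent\emph{Routine part.} For countable closure, given a descending $\omega$-sequence one unites everything to a tree of countable limit height $\beta$ and adjoins level $\beta$, continuing every node (for normality and branching) and every branch labelled so far, and choosing the new branches carefully enough that a splitting regressing function $f_\beta$ can be defined on the countably many new nodes (a back-and-forth against an enumeration of pairs of new nodes). For the $\omega_2$-chain condition one uses that, under $\GCH$, there are only $\omega_1$ possibilities for $\langle T_p,\bar f_p\rangle$ (the underlying set being a countable ordinal); given $\omega_2$ conditions, a $\Delta$-system argument on $\dom(B_p)$ together with pigeonhole on $\langle T_p,\bar f_p\rangle$ and on $B_p$ restricted to the root of the $\Delta$-system produces two compatible conditions (same tree, same regressing functions, unite the branch labels). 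As $|\mathbb{P}|\le\omega_2$, it follows that $\mathbb{P}$ is cardinal- and cofinality-preserving and preserves $\GCH$. In $V[G]$ put $T=\bigcup_{p\in G}T_p$, $F=\bigcup_{p\in G}\bar f_p$; density arguments show $T$ is a normal $\omega_1$-tree, that $F$ witnesses that $T$ is regressive, and that the branches $b_\xi=\{x:\exists p\in G\ (x\le_T B_p(\xi))\}$, $\xi<\omega_2$, are pairwise distinct and cofinal, so $T$ is a regressive Kurepa tree.

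\smallskip
\noindent\emph{The main obstacle: $T$ is almost Souslin.} Suppose not, and fix in $V[G]$ an antichain $X$ of $T$ with $S_X$ stationary; by thinning, $X=\{x_\delta:\delta\in S\}$ with $ht(x_\delta)=\delta$ and $S\subseteq\omega_1$ stationary and consisting of limit ordinals. Applying the regressing functions of $F$ along $X$ and pressing down (Fodor) on the heights of the values, then using the countability of the levels of $T$ to pigeonhole, one finds a fixed node to which uncountably many $x_\delta$ are mapped; the splitting clause, together with the incomparability of the $x_\delta$ and again the countability of levels, should then force two of these $x_\delta$ to be comparable, a contradiction. The subtlety --- and the place where the real work lies, as opposed to the essentially routine closure and chain-condition arguments above --- is that the regressing functions act only within a single level, so the contradiction is not immediate: one must iterate this pressing-down-and-pigeonhole inside successive subtrees of $T$ (and, if necessary, exploit genericity of $G$ via a name for $X$) to drive the argument through. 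This is the heart of the proof.
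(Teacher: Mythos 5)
Your forcing and the routine lemmas (countable closure, the $\omega_2$-c.c.\ under GCH, and the Kurepa-ness and regressivity of the generic tree) match the paper's. The gap is exactly where you locate ``the heart of the proof'': almost Souslinity is never established, and the mechanism you sketch for it would not work. The splitting clause in the definition of a regressive tree constrains only pairs of nodes \emph{on the same limit level}; once you thin the antichain to one node per level (as you do), it says nothing whatsoever about any pair $x_\delta\neq x_{\delta'}$, so no amount of pressing down and pigeonholing on the values of the regressing functions can force two elements of $X$ to be comparable. In the paper the regressing functions are only there to make the tree regressive; they play no role at all in the almost Souslin argument.

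The paper's contradiction comes instead from the branch labels. One defines $h$ on a node $x=g_p(\xi)$ of level $\alpha_p$ by $h(x)=g_q(\xi)$, where $\alpha_q$ is least with $\xi\in\dom(g_q)$; after Fodor (twice) and a pigeonhole on a countable level and on a countable $\dom(g_q)$, one finds two antichain elements carrying the \emph{same} label $\xi$, and these both lie on the branch $b_\xi$, hence are comparable. But for $h$ to be height-regressive one must work on levels $\alpha$ at which every label present at level $\alpha$ already occurred below, i.e.\ $\dom(g_p)=\bigcup\{\dom(g_q):\alpha_q<\alpha_p\}$. The set $S$ of such levels is stationary but need not contain a club, so a given stationary $S_X$ may miss it entirely. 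The paper therefore performs a \emph{second}, $\omega_1$-distributive, $\omega_2$-c.c.\ forcing shooting a club through $S$ (alternatively, it passes to a subtree $T^*$ of $T$, as in its Section 3). Your single-step plan asserts that $T$ itself is almost Souslin already in $V[G]$; this is exactly the point the extra forcing is designed to handle, and your sketch neither proves it nor supplies a substitute device.
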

\begin{proof}
Let $\kappa \geq \omega_2.$ We produce a cardinal preserving
generic extension of $V$  which contains an almost Souslin
regressive Kurepa tree with $\kappa-$many branches. First we
define a forcing notion $\mathbb{P}$ which adds a regressive
Kurepa tree with $\kappa-$many branches. This forcing is
essentially the forcing notion of [8]. Conditions in $\mathbb{P}$
are of the form $p=\langle T_{p}, \leq_{p}, g_{p}, f_{p}\rangle$
where:

\begin{enumerate}
\item $T_p \subseteq \omega_1$ is countable,
\item $\langle T_p, \leq_p \rangle$ is a normal
$\alpha_p+1-$tree, where $\alpha_p$ is an ordinal less than
$\omega_1,$
\item $g_{p}$ is a bijection from a subset of $\kappa$
onto $(T_p)_{\alpha_p},$ the $\alpha_p-$th level of $T_p$,
\item $f_p: T_{p, lim} \rightarrow T_p,$ where $T_{p, lim}= \{x
\in T_p: ht(x)$ is a limit ordinal $ \},$
\item For all $x \in T_{p, lim}, f_p(x) <_p x,$
\item For each $x \neq y$ in $T_{p, lim},$ if $ht(x)=ht(y),$ then
at least one of $f_p(x)$ or $f_p(y)$ is above the meet of $x$ and
$y$,
\end{enumerate}

The order relation on $\mathbb{P}$ is defined by $p \leq q$ ($p$
is an extension of $q$) iff:

\begin{enumerate}
\item $\langle T_p, \leq_p  \rangle$ end extends  $\langle T_q,
\leq_q  \rangle$,
\item $domg_{p} \supseteq domg_{q},$
\item for all $\alpha \in domg_{q}, g_{p}(\alpha) \geq_{p} g_{q}(\alpha),$
\item $f_p \supseteq f_q.$
\end{enumerate}

The following lemma can be proved easily (see also [8] Theorem 5).
\begin{lemma}
$(a)$ Let $p \in \mathbb{P}$ and $\alpha <
\kappa.$ Then there exists $q \leq p$ such that $\alpha \in
domg_{q}.$ Furthermore $q$ can be chosen so that
$\alpha_{q} = \alpha_{p}+1,$ and $domg_{q}= domg_{p} \cup \{ \alpha \}.$

$(b)$ Let $\langle p_n:n < \omega \rangle$ be a descending
sequence of conditions in $\mathbb{P}.$ Then there exists $q \in
\mathbb{P}$ which extends all of the $p_n$'s. Furthermore $q$ can
be chosen so that $\alpha_q = sup_{n \in \omega}\alpha_{p_n},$ and
$dom(g_{q})= \bigcup_{n < \omega}
dom(g_{p_n}).$

$(c)$ $\mathbb{P}$ satisfies the $\omega_2-c.c.$.\hfill$\Box$
\end{lemma}

It follows from the above lemma that $\mathbb{P}$ is a cardinal
preserving forcing notion. Let $G$ be $\mathbb{P}-$generic over
$V$. Let
\begin{itemize}
\item $T = \bigcup_{p \in G}T_p,$
\item $\leq_T = \bigcup_{p \in G}\leq_p,$
\item $f= \bigcup_{p \in G}f_p: T_{lim}
\rightarrow T,$ where $T_{lim}= \{x \in T: ht(x)$ is a limit
ordinal$ \}.$
\end{itemize}

It is easy to show that $\langle T, \leq_T \rangle$ is a normal
regressive $\omega_1-$tree.
\begin{lemma}
$\langle T, \leq_T \rangle$ has $\kappa-$many branches, in
particular it is a Kurepa tree.
\end{lemma}
\begin{proof}
The lemma follows easily from the following facts:

(1) For each $\xi < \kappa, \{g_{p}(\xi): p \in G, \xi \in
dom(g_{p}) \}$ determines a branch $b_{\xi}$ of $T$.

(2) For $\xi \neq \zeta$ in $\kappa, b_{\xi} \neq
b_{\zeta}.$
\end{proof}

Let $S= \{\alpha_{p}: p \in G, domg_{p}=\bigcup \{domg_{q}: q \in G, \alpha_{q}< \alpha_{p} \}  \}$.
\begin{lemma}
$S$ is a stationary subset of $\omega_1.$
\end{lemma}
\begin{proof}
 Let $\dot{S}$ be a $\mathbb{P}-$name for $S$. Let $p \in \mathbb{P}$ and $\dot{C}$ be a $\mathbb{P}-$name such that

\begin{center}
$p \vdash \ulcorner \dot{C}$ is a club subset of $\omega_1  \urcorner$.
\end{center}

We find $q \leq p$ which forces $\dot{S} \cap \dot{C} \neq
\emptyset.$ Define by induction two sequences $\langle p_n: n <
\omega  \rangle $ of conditions in $\mathbb{P}$ and $\langle
\beta_n: n< \omega \rangle$ of countable ordinals such that:

\begin{itemize}
\item $p_{0}=p,$
\item $p_{n+1} \leq p_{n},$
\item $\alpha_{p_{n}} < \beta_{n} < \alpha_{p_{n+1}},$
\item $p_{n+1} \vdash \ulcorner \beta_{n} \in \dot{C} \urcorner$
\end{itemize}

By Lemma 2.2(b), there is $q \in \mathbb{P}$ such that $q$ extends
$p_{n}$'s, $n < \omega,$ and such that $\alpha_q = sup_{n \in
\omega}\alpha_{p_n},$ and $dom(g_{q})= \bigcup_{n < \omega}
dom(g_{p_n}).$ Then it is easily seen that $q \vdash \ulcorner
\alpha_{q} \in \dot{S} \cap \dot{C}  \urcorner.$
\end{proof}

Working in $V[G],$ let $\mathbb{Q}$ be the usual forcing notion
for adding a club subset to $S$. Thus conditions in $\mathbb{Q}$
are closed bounded subsets of $S$ ordered by end extension. Let
$H$ be $\mathbb{Q}-$generic over $V[G].$  The following is
well-known (see [4] Theorem 23.8).
\begin{lemma}
$(a)$ $\mathbb{Q}$ is $\omega_1-$distributive,

$(b)$ $\mathbb{Q}$ satisfies the $\omega_2-c.c.$,

$(c)$ $C =\bigcup H \subseteq S$ is a club subset of
$\omega_1.$\hfill$\Box$
\end{lemma}

It follows that $\mathbb{Q}$ is a cardinal preserving forcing notion and hence $\langle T, \leq_{T} \rangle$ remains a regressive Kurepa tree with $\kappa-$many branches in $V[G][H]$.  We show that in $V[G][H],  \langle T, \leq_{T} \rangle$ is also almost Souslin.

\begin{lemma}
In $V[G][H],$ $\langle T, \leq_T \rangle$ is almost Souslin.
\end{lemma}
\begin{proof}
 Suppose not. Let $Z \subseteq T$ be an antichain of
$T$ such that $S_Z = \{ht(x): x \in Z \}$ is stationary in
$\omega_1.$ We may further suppose that for $x \neq y$ in $Z,
ht(x) \neq ht(y)$, and that $S_Z \subseteq C.$

First we define a map $h$ on $T \upharpoonright C= \{x \in T: ht(x) \in C \}$ as follows: Let $\alpha \in C$ and $x \in T_{\alpha}.$ Pick $p \in G$ such that $\alpha= \alpha_{p}.$  Then $x= g_{p}(\xi),$ for some $\xi \in domg_{p}$. Let $h(x)=g_{q}(\xi),$ where $q \in G$ is such that $\alpha_{q}$ is minimal with $\xi \in domg_{q}.$ Note that $\alpha_{q} < \alpha_{p},$ and $h(x) <_{T} x$ (as $C \subseteq S$).

The map  $ht(x) \mapsto ht(h(x))$ is well-defined and regressive on $S_Z,$ hence by Fodor's lemma there is $Y \subseteq Z$ and an ordinal $\gamma < \omega_1$ such that $S_Y$ is stationary and for all $x \in Y,
ht(h(x))= \gamma.$ Since $T_{\gamma}$ is countable, we can find $X
\subseteq Y,$ and $t \in T$ such that $S_X$ is stationary, and for
all $x \in X, h(x)= t.$ Now
\begin{center}
$\forall x \in X, \exists p_x \in G, \exists \xi_x \in dom(g_{p_x})(x = g_{p_x}(\xi_x)).$
\end{center}
and then for all $x \in X, h(x)= g_{q_x}(\xi_x),$ where $q_x \in
G$ is such that $\alpha_{q_{x}}$ is minimal with  $\xi_x \in
domg_{q_x}.$ The map $\alpha_{p_{x}} \mapsto \alpha_{q_{x}}$ is
regressive on $S_X$, and hence we can find $W \subseteq X,$ and
$\eta < \omega_1$ such that $S_W$ is stationary and for all $x \in
W, \alpha_{q_{x}}= \eta.$ Let $q \in G$ be such that $\alpha_{q}=
\eta$ Then for all $x \in W, h(x)= g_{q}(\xi_x).$ As $domg_{q}$ is
countable, there are $V \subseteq W$ and $\xi \in domg_{q}$ such
that $S_V$ is stationary and for all $x \in V, \xi_x= \xi.$ Then
for all $x \in V, x = g_{p_x}(\xi).$ Choose $x \neq y$ in $V$, and
let $p \in G$ be such that $p \leq p_x, p_y$. Then $g_{p}(\xi)
\geq_{p} x=g_{p_x}(\xi),y=g_{p_y}(\xi)$. It follows that $x$ and
$y$ are compatible and we get a contradiction. The lemma
follows.
\end{proof}

Thus in $V[G][H],$ $\langle T, \leq_T \rangle$ is an almost
Souslin regressive Kurepa tree with $\kappa-$many branches. This
completes the proof of Theorem 2.1.
\end{proof}
\begin{remark}
As we will see in the next section, just working in $V[G]$, it is possible to define a subtree $T^{*}$ of $T$ which is an almost Souslin regressive Kurepa tree with $\kappa-$many branches. We gave the above argument for Theorem 2.1, since it was our original motivation for defining $T^{*}$.
\end{remark}

\section{Almost Souslin Kurepa trees in $L$}

In this section, answering question 2.2, we show that an almost
Souslin Kurepa tree exists in $L$. Again as in section 2, we prove
something stronger.
\begin{theorem}
If there exists an $(\omega_1, 1)-$morass, then
there is an almost Souslin regressive Kurepa tree.
\end{theorem}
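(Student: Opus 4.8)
The plan is to mimic the forcing construction from Section 2, but to replace the generic object by an object built by hand from an $(\omega_1,1)$-morass, and then to cut down the resulting Kurepa tree to an almost Souslin subtree. Recall that an $(\omega_1,1)$-morass provides a coherent family of countable structures (or, in the gap-1 simplified morass formulation, a tree of order type $\omega_1$ together with maps) whose direct limit reaches $\omega_2$; this is exactly the combinatorial device that lets one add $\omega_2$-many branches to an $\omega_1$-tree without collapsing cardinals, and it is the standard way Kurepa trees are built in $L$. So first I would fix such a morass and use it to construct, directly in $L$ (or in any model with a morass), a normal $\omega_1$-tree $\langle T,\le_T\rangle$ together with a regressive function $f\colon T_{\mathrm{lim}}\to T$ satisfying the analogue of conditions (5)--(6) of the forcing conditions $p\in\mathbb P$, and with $\kappa=\omega_2$-many branches $\langle b_\xi:\xi<\omega_2\rangle$. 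Concretely, I would run the natural recursion along the morass tree: at successor steps do the trivial thing, at limit steps of the morass use the morass maps to decide which nodes of the next level lie above which branches (this is what keeps the number of branches large), and simultaneously define $f$ on the new limit level using exactly the bookkeeping that makes (6) hold — pick, for each new node $x$ at a limit height, a point $f(x)<_T x$ in a way that "separates" pairs, just as Lemma 2.2 arranges in the forcing.

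The second ingredient is the set $S$: in the forcing proof $S$ was the set of "closure points" $\{\alpha_p:\mathrm{dom}\,g_p$ is the union of the earlier domains$\}$, which turned out to be stationary (Lemma 2.5), and the key to almost Souslinness (Lemma 2.7) was that on $T\restriction C$ for a club $C\subseteq S$ one has a regressive map $h$ with $h(x)<_T x$ obtained from the first stage at which the relevant branch index entered a domain. In the morass construction the analogue of "$\xi$ enters $\mathrm{dom}\,g_q$ at stage $\alpha_q$" is "$\xi<\omega_2$ first appears in the range of a branch at morass height $\alpha$"; so I would define $S$ to be the set of morass heights $\alpha$ that are closure points in the corresponding sense, prove $S$ is stationary (or even club — a morass naturally gives a club of such points, which is cleaner than the forcing situation), and then define $h$ on $T\restriction S$ by $h(x)=$ the value, on branch $b_{\xi}$, of the tree at the height where $\xi$ was "born", where $x$ lies on $b_\xi$. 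Then the almost Souslin argument is essentially a copy of Lemma 2.7: given an antichain $Z$ with $S_Z=\{ht(x):x\in Z\}$ stationary, thin out $S_Z\subseteq S$, apply Fodor to $ht(x)\mapsto ht(h(x))$, then thin by countability of a fixed level, then apply Fodor again to the "birth height" function, then thin by countability again, until all surviving nodes lie on a single branch $b_\xi$ at two different heights — contradicting that $Z$ is an antichain, since two nodes of the same branch are comparable.

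Here is where I would have to be careful, and where I expect the main obstacle to be: in the forcing proof the final contradiction came from genericity-style amalgamation (two conditions $p_x,p_y$ with a common extension $p$ forcing $g_p(\xi)$ above both), which is \emph{not} available when the tree is built by hand. So the morass construction must be arranged from the start so that the branches $b_\xi$ are \emph{genuine} branches of $T$ and so that the "$h$" map really does land on the right branch below $x$; equivalently, one must build into the recursion the statement "if $x$ has height in $S$ and lies above the birth-node of $b_\xi$, then in fact $x\in b_\xi$ exactly when $h(x)$ is the birth-node," so that two nodes $x\ne y$ at different heights in $S$ with the same birth-node and the same index $\xi$ are forced to be $\le_T$-comparable. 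Making the morass bookkeeping consistent with conditions (5)--(6) \emph{and} with this "branches are determined by birth data above $S$" property simultaneously is the delicate point; the morass coherence conditions are exactly what should make it possible, but threading all three requirements through the limit stages of the recursion is the real work. Finally, as the Remark after Theorem 2.1 already flags, one may need to pass to a subtree $T^*\subseteq T$ (the nodes that are "correctly placed" with respect to $S$) to get a clean almost Souslin regressive Kurepa tree; I would define $T^*$ by restricting to heights in $\overline S$ or to nodes lying on some $b_\xi$, check that $T^*$ is still normal, still regressive (inheriting $f$), still has $\omega_2$ branches, and run the almost Souslin argument on $T^*$ — concluding the theorem, and in particular, since $L$ carries an $(\omega_1,1)$-morass, establishing the existence of an almost Souslin Kurepa tree in $L$.
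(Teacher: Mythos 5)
There is a genuine gap here, and you have in fact pointed at it yourself: the entire construction of the tree ``by hand'' along the morass --- the limit-stage bookkeeping that is supposed to simultaneously keep $\omega_2$-many branches alive, define the regressive separating function $f$, and guarantee the ``comparability from birth data'' property that your almost-Souslin argument needs --- is never carried out. You correctly observe that the final step of the Section~2 argument (two nodes $x=g_{p_x}(\xi)$, $y=g_{p_y}(\xi)$ are comparable because $p_x,p_y$ have a common extension $p\in G$) rests on $G$ being a directed filter, and that this is the step that breaks if the tree is built by an ad hoc recursion; but you then leave the replacement for that step as ``the real work,'' which is precisely the content of the theorem. As written, the proposal is a plan for a proof, not a proof.

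The paper avoids this difficulty entirely, and it is worth seeing how. Velleman's theorem (Theorem~3.3 in the paper, Theorem~1.1.3 of [15]) says that an $(\omega_1,1)$-morass is \emph{equivalent} to a forcing-axiom-like principle: for any poset $\mathbb{P}$ with an $\omega_1$-indiscernible family $\mathbb{D}=\{D_\alpha:\alpha<\omega_2\}$ of open dense sets, there is an $\omega_1$-complete filter $G$ meeting every $D_\alpha$. Applying this to the \emph{same} poset $\mathbb{P}$ from Section~2 (with $\kappa=\omega_2$) and $D_\alpha=\{p:\alpha\in \mathrm{dom}\,g_p\}$ hands you a genuinely directed $G$, so the tree $T=\bigcup_{p\in G}T_p$ is a normal regressive Kurepa tree and the amalgamation step of Lemma~2.6 goes through verbatim --- no hand-built coherence is needed. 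The only new work is that one cannot force a club through the stationary set $S$ as in Section~2; instead the paper sets $S=\{\alpha_p:p\in G\}$, takes $C$ to be its set of limit points (a club, using $\omega_1$-completeness of $G$), and prunes $T$ level-by-level at each $\alpha\in C$, keeping only those nodes of level $\alpha$ that lie above $g_p(\xi)$ for a fixed index $\xi$ and all relevant $p\in G$. On the resulting subtree $T^*$ the ``birth-height'' map $h$ is well defined and regressive, and the Fodor argument of Lemma~2.6 applies. So the correct move is not to rebuild the tree from the raw morass but to invoke the morass in its dense-set formulation against the Section~2 poset; your sketch, to be completed along the lines you propose, would require proving from scratch a coherence lemma that the cited machinery gives for free.
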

To prove the above theorem, we need some definitions and facts
from [15]. Let $(\mathbb{P}, \leq)$ be a partial order and
$\mathbb{D}= \{ D_{\alpha}: \alpha < \omega_2 \}$ be a family of
open dense subsets of $\mathbb{P}.$ For $p \in \mathbb{P},$ let
$rlm(p)= \{ \alpha < \omega_2: p \in D_{\alpha} \},$ and for
$\alpha < \omega_2$ let $\mathbb{P}_{\alpha} = \{p \in \mathbb{P}:
rlm(p) \subseteq \alpha \}.$ Also let $\mathbb{P}^* =
\bigcup_{\alpha < \omega_1}\mathbb{P}_{\alpha}$.

\begin{definition}
$\mathbb{D}$ is
an $\omega_1-$indiscernible family if the following conditions are
satisfied:

(1) $\mathbb{P}^* \neq \emptyset,$ and for all $\alpha < \omega_1,
\mathbb{P}^* \cap \textit{D}_{\alpha}$ is open dense in
$\mathbb{P}^*$,

(2) For all $\alpha < \omega_1, (\mathbb{P}_{\alpha}, \leq)$ is
$\omega_1-$closed,

Also for each order preserving function $f: \alpha \rightarrow
\gamma, \alpha < \omega_1, \gamma < \omega_2$ there is a function
$\sigma_f: \mathbb{P}_{\alpha} \rightarrow \mathbb{P}_{\gamma}$
such that

(3) $\sigma_f$ is order preserving,

(4) For all $p \in \mathbb{P}_{\alpha},
rlm(\sigma_f(p))=f[rlm(p)]$,

(5) If $\beta < \omega_1, f \upharpoonright \beta = id \upharpoonright \beta, f(\beta)
\geq \alpha, \gamma < \omega_1$ and $p \in \mathbb{P}_{\alpha},$
then $p$ and $\sigma_f(p)$ are

$\hspace{.5cm}$ compatible in $\mathbb{P}^*,$

(6) If $f_1: \alpha_1 \rightarrow \alpha_2, f_2: \alpha_2
\rightarrow \gamma $ are order preserving, $\alpha_1, \alpha_2 <
\omega_1, \gamma < \omega_2,$ then

$\hspace{.5cm}$ $\sigma_{f_2 \circ
f_1}=\sigma_{f_2}\circ \sigma_{f_1}.$
\end{definition}
We also need the following theorem (See [15] Theorem 1.1.3).

\begin{theorem}
The following are equivalent:

$(a)$ There exists an $(\omega_1, 1)-$morass,

$(b)$ Whenever $\mathbb{P}$ is a partial order and $\mathbb{D}$ is
an $\omega_1-$indiscernible family of open dense subsets of
$\mathbb{P}$, then there is a set $G$ which is
$\mathbb{P}-$generic over $\mathbb{D}.$ Furthermore $G$ can be
chosen to be $\omega_1-$complete.\hfill$\Box$
\end{theorem}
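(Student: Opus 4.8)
The plan is to establish the two implications separately, treating $(a)\Rightarrow(b)$ as the substantive direction: it is exactly the assertion that an $(\omega_1,1)$-morass serves as a blueprint for assembling an object of size $\omega_2$ from a coherent, amalgamated system of countable approximations. The converse $(b)\Rightarrow(a)$ is then obtained by feeding a purpose-built poset of morass approximations into the generic-existence principle.

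For $(a)\Rightarrow(b)$, fix $\mathbb{P}$ together with an $\omega_1$-indiscernible family $\mathbb{D}=\{D_\alpha:\alpha<\omega_2\}$, and fix an $(\omega_1,1)$-morass with level ordinals $\langle\theta_\alpha:\alpha\leq\omega_1\rangle$ (so $\theta_\alpha<\omega_1$ for $\alpha<\omega_1$ and $\theta_{\omega_1}=\omega_2$) and its systems of order-preserving maps $F_{\alpha\beta}$. First I would build, by recursion along the morass, a coherent system of conditions indexed by the morass nodes: at a node sitting over the countable level $\alpha$ I attach a condition $p\in\mathbb{P}_{\theta_\alpha}$ lying in $D_\nu$ for every $\nu<\theta_\alpha$. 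Such a $p$ is produced using the $\omega_1$-closure of $\mathbb{P}_{\theta_\alpha}$ (axiom (2)) to thread a decreasing $\theta_\alpha$-sequence through the sets $D_\nu\cap\mathbb{P}^*$, which are dense in $\mathbb{P}^*$ by axiom (1); since each $D_\nu$ is open, the lower bound obtained from closure lies in all of them at once. To move a condition from a lower level to a higher one I would apply the stretching map $\sigma_f$ attached to a morass map $f$, invoking axiom (3) to keep it order-preserving, axiom (4) to guarantee that $\sigma_f(p)$ lies in precisely the dense sets indexed by $f[rlm(p)]$ (so that membership in $D_\nu$ below transports to membership in $D_{f(\nu)}$ above), and axiom (6) so that iterated stretchings commute and the whole system stays coherent. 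Because the morass maps collectively send countable ordinals onto a cofinal, indeed covering, family of ordinals below $\omega_2$, reading off the filter $G$ generated by the top-level system produces a filter meeting every $D_\nu$, $\nu<\omega_2$; the $\omega_1$-completeness of $G$ comes for free from the closure used at each countable stage.

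The step I expect to be the main obstacle is the amalgamation at the limit nodes of the morass. There a node is approached from below by a whole system of earlier nodes, and I must glue the conditions attached to them (and their stretched images) into a single condition that simultaneously extends them all and lands in the correct $\mathbb{P}_{\theta_\alpha}$. This is exactly where axiom (5) is indispensable: when a morass map $f$ fixes an initial segment and pushes the remainder upward, (5) furnishes compatibility of $p$ with $\sigma_f(p)$ in $\mathbb{P}^*$, which is what allows a condition to be merged with its own stretched copy without destroying the genericity already secured. Orchestrating these compatibilities coherently across all branches feeding a given morass limit — so that a genuine common lower bound exists — is the combinatorial core of the argument, and it is precisely the limit structure of the morass that makes it possible.

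For the converse $(b)\Rightarrow(a)$, I would take $\mathbb{P}$ to be the poset of countable approximations to a morass (fragments of the level ordinals together with partial systems of maps, ordered by end-extension) and let $D_\alpha$ assert that the $\alpha$-th piece of the prospective morass has been filled in. The stretching maps $\sigma_f$ demanded by the indiscernibility axioms can be read straight off the maps that the approximations already carry, so (3), (4) and (6) hold essentially by construction, while (1), (2) and (5) follow from the closure and amalgamation properties designed into the approximations. Applying (b) then yields a generic $G$, and $\bigcup G$ assembles into a structure that one checks against the morass axioms; this verification is routine once the poset is correctly specified, so the full weight of the theorem rests on the $(a)\Rightarrow(b)$ direction.
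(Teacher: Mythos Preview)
The paper does not prove this theorem at all: it is quoted from Velleman~[15], Theorem~1.1.3, and the statement is closed with a $\Box$ immediately after the conclusion, with no argument given. So there is no ``paper's own proof'' to compare your proposal against; the result is used as a black box.

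For what it is worth, your outline is the standard shape of Velleman's argument and is broadly on target. One caution on $(a)\Rightarrow(b)$: you describe attaching to each morass node at level~$\alpha$ a single condition in $\bigcap_{\nu<\theta_\alpha} D_\nu$, but the recursion actually has to build a decreasing \emph{sequence} of conditions along each branch of the morass (so that at the top level one reads off a filter, not just a single condition), and coherence across morass maps must be maintained for the entire sequence, not merely for its lower bound. Your amalgamation paragraph anticipates this, but the bookkeeping is heavier than the sketch suggests. The converse direction is as you say.
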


We are now ready to give the proof of Theorem 3.1.

Assume an $(\omega_1, 1)-$morass exists. Let $(\mathbb{P}, \leq)$
be the forcing notion of section 2, when $\kappa = \omega_2,$ for
adding a regressive Kurepa tree with $\omega_2-$many branches. For
each $\alpha < \omega_2$ let $\textit{D}_{\alpha}= \{p \in
\mathbb{P}: \alpha \in domg_{p} \},$ and let $\mathbb{D}= \{
D_{\alpha}: \alpha < \omega_2 \}$. Then it is easy to see that for
each $p \in \mathbb{P}, rlm(p)= domg_{p},$ for each $\alpha <
\omega_2, \mathbb{P}_{\alpha}= \{p \in \mathbb{P}: domg_{p}
\subseteq \alpha \},$ and  $\mathbb{P}^*=\mathbb{P}_{\omega_1}.$

By Theorem 1.2.1 of [15],   $\mathbb{D}$ is an
$\omega_1-$indiscernible family of open dense subsets of
$\mathbb{P}.$ Thus using Theorem 3.3, there exists $G \subseteq
\mathbb{P}$ which is $\mathbb{P}-$generic over $\mathbb{D}$ and is
$\omega_1-$complete. Define $T, \leq_T$ and $f$ exactly as in the
proof of Theorem 2.1. By Theorem 1.2.2 of [15], $\langle T, \leq_T
\rangle$ is a normal Kurepa tree, and using $f$ it is regressive.

We now define a subtree of $T$ which is an almost Souslin regressive Kurepa tree. Let $S= \{\alpha_p: p \in G \}$ and let $C$ be the set of limit points of $S$. Then $C$ is a club subset of $\omega_1.$ We first define by induction on $\alpha < \omega_1$ a sequence $\langle T^{\alpha}: \alpha < \omega_1 \rangle$ of subtrees of $T$ as follows:

$\hspace{.5cm}\bullet$ $\alpha=0:$ Let $T^{0}=T$,

$\hspace{.5cm}\bullet$ $\alpha= \beta+1:$ Let $T^{\alpha}=T^{\beta}$,

$\hspace{.5cm}\bullet$ $\alpha$ is a limit ordinal, $\alpha \notin C:$ Let $T^{\alpha}= \bigcap_{\beta < \alpha}T^{\beta}$,

$\hspace{.5cm}\bullet$ $\alpha \in C:$ First let $T^{*}= \bigcap_{\beta < \alpha}T^{\beta}.$ Now we define $T^{\alpha}$ as follows:

$\hspace{.5cm}-$ $(T^{\alpha})_{< \alpha}= (T^{*})_{< \alpha}$,

$\hspace{.5cm}-$ $(T^{\alpha})_{\alpha}= \{x \in (T^{*})_{\alpha}: \exists \xi < \kappa, \forall p \in G (\alpha_{p} < \alpha \wedge \xi \in domg_p \Rightarrow g_{p}(\xi) \leq_{T} x)  \}$,

$\hspace{.5cm}-$ for $\gamma > \alpha, (T^{\alpha})_{\gamma}= \{x \in (T^{*})_{\gamma}: \exists y \in (T^{\alpha})_{\alpha}, x \geq_{T} y \}$.

\begin{remark}
For $x \in (T^{\alpha})_{\alpha},$ the required $\xi$ is unique. Furthermore If $y \in T^{\alpha}$, and $x \leq_{T} y,$ then $x \in T^{\alpha}.$
\end{remark}

Finally let $T^{*}= \bigcap_{\alpha < \omega_1}T^{\alpha}.$ Clearly $T^{*}$ is a subtree of $T$ with $\omega_2-$many branches, and hence it is a regressive Kurepa tree. We show that it is almost Souslin.

For $\alpha \in C$ we define $g_{\alpha}: \bigcup \{domg_{p}: p \in G, \alpha_{p} < \alpha \} \rightarrow (T^{*})_{\alpha}$ as follows: Let $\xi \in domg_{p},$ where $p \in G, \alpha_{p} < \alpha $. Then there is a unique $x \in (T^{*})_{\alpha}$ such that:

\begin{center}
$\forall q \in G (\alpha_{q} < \alpha \wedge \xi \in domg_{q} \Rightarrow x \geq_{T} g_{q}(\xi)).$
\end{center}

Let $g_{\alpha}(\xi)=x.$ Let us note that $g_{\alpha}$ is a
bijection and for $\alpha \in C \cap S, g_{\alpha}=g_{p}$ where $p
\in G$ is such that $\alpha_{p}=\alpha.$ Next we define a function
$h: T^{*} \upharpoonright C \rightarrow T^{*}$ as follows: Let $\alpha \in C$
and $x \in (T^{*})_{\alpha}.$ Then $x=g_{\alpha}(\xi)$ for some
$\xi \in domg_{\alpha}.$ Let $h(x)=g_{\beta}(\xi),$ where $\beta$
is the least ordinal such that for some $p \in G,
\beta=\alpha_{p},$ and $\xi \in domg_{p}.$ Note that $\beta <
\alpha$ and $h(x)<_{T} x.$ Now as in the proof of Lemma 2.6, we
can show that $T^{*}$ is almost Souslin. Hence $T^{*}$ is an
almost Souslin regressive Kurepa tree. This completes the proof of
Theorem 3.1.\hfill$\Box$

\begin{remark}
The methods of this paper can be used to get more consistency results about trees. For example we can show that the existence of an almost Souslin Kurepa tree with no Aronszajn subtrees is consistent with $ZFC,$ and that such a tree exists in $L$.
\end{remark}
The following question from [16] remained open.

\begin{question}
Does there exist a Souslin tree T such that for each
$G$ which is $T-$generic over $V, T$ is an almost Souslin
Kurepa tree in $V[G]$?
\end{question}

\bibliographystyle{amsplain}

\begin{thebibliography}{10}


\bibitem  KK.J. Devlin, S. Shelah, Souslin properties and tree topologies,
Proc. Lond. Math. Soc. (3), vol 39(1979), 237-252. MR0548979 (80m:54031)

\bibitem TT. Jech, Non-provability of Souslin's hypothesis, Comment. Math. Univ. Carolinae 8 1967
291-305. MR0215729 (35 $\sharp$6564)


\bibitem  TT. Jech, Trees, J. Symbolic Logic, vol 36(1971)1-14. MR0284331
(44$\sharp$1560)

\bibitem  TT. Jech, Set theory, 3rd millennium ed., 2003, Springer
Monographs in Mathematics. MR1940513 (2004g:03071)

\bibitem RR. B. Jensen, SH is incompatible with V = L, Notices of the American Mathematical Society,
vol. 15 (1968), 935.

\bibitem RR. B. Jensen, The fine structure of the constructible hierarchy, Ann. Math. Logic 4 (1972), 229-308; erratum, ibid. 4 (1972),
443. MR0309729 (46 $\sharp$8834)
\bibitem RR. B. Jensen, SH is compatible with CH, (mimeographed).

\bibitem  BB. K\"{o}nig, Y. Yoshinobu, Kurepa trees and Namba forcing, in
preparation.

\bibitem SS. Shelah, Proper forcing, Lecture Notes in Mathematics, 940. Springer-Verlag, Berlin-New York, 1982. MR0675955 (84h:03002)

\bibitem JJ. Silver, The independence of Kurepa hypothesis and two cardinal conjecture in model
theory, 1971 Axiomatic Set Theory (Proc. Sympos. Pure Math., Vol.
XIII, Part I, Univ. California, Los Angeles, Calif., 1967) pp.
383-390 Amer. Math. Soc., Providence, R.I. MR0277379 (43
$\sharp$3112)

\bibitem RR. Solovay, S. Tennenbaum, Iterated Cohen's extensions and Souslin's
problem, Ann. of Math. (2) 94 (1971), 201-245. MR0294139 (45
$\sharp$3212)

\bibitem DD. H. Stewart, M.Sc. Thesis, Bristol, 1966.

\bibitem SS. Tennenbaum, Souslin's problem, Proc. Nat. Acad. Sci. U.S.A. 59
(1968)
60-63. MR0224456 (37 $\sharp$55)

\bibitem SS. Todor\v{c}evi\'{c}, Trees and linearly ordered sets, Handbook of set-theoretic topology, 235-293, North-Holland, Amsterdam,
1984. MR0776625 (86h:54040)

\bibitem  DD. Velleman, Morasses, diamond and forcing, Ann. Math. Logic 23 (1982), no. 2-3, 199-281
(1983). MR0701126 (85e:03121)


\bibitem  MM. Zakrzewski, Some problems concerning Kurepa trees, J.
sumbolic Logic, Vol. 52, No. 3 (1987)894.

\end{thebibliography}

Department of Mathematics, Shahid Bahonar University of Kerman,
Kerman-Iran and School of Mathematics, Institute for Research in
Fundamental Sciences (IPM), Tehran-Iran.

golshani.m@gmail.com

\end{document}